\address{\newline{\normalsize Kavli IPMU (WPI), The University of Tokyo, 5-1-5 Kashiwanoha, Kashiwa, 277-8583, Japan}
\newline{\it E-mail address}: ilya.karzhemanov@ipmu.jp}
\makeatletter\@addtoreset{equation}{section}\makeatother
\makeatletter\@addtoreset{subsection}{equation}\makeatother
\newcommand{\p}{\mathbb{P}}
\newcommand{\cel}{\mathbb{Z}}
\newcommand{\com}{\mathbb{C}}
\newcommand{\slg}{\mathrm{SL}}
\newcommand{\map}{\longrightarrow}
\newcommand{\au}{\mathrm{Aut}}
\newtheorem{theorem}[equation]{Theorem}
\newtheorem{prop}[equation]{Proposition}
\newtheorem{lemma}[equation]{Lemma}
\newtheorem{cor}[equation]{Corollary}
\theoremstyle{remark}
\newtheorem{remark}[equation]{Remark}
\newtheorem*{assumption}{Assumption}
\newtheorem{ex}[equation]{Example}
\begin{document}

\title{On RC varieties without smooth rational curves}

\thanks{}

\author{Ilya Karzhemanov}

\begin{abstract}
We construct normal rationally connected varieties (of arbitrarily
large dimension) not containing any smooth rational curves.
\end{abstract}

\medskip

\thanks{{\it MS 2010 classification}:  14M22, 14H45, 14H60}

\thanks{{\it Key words}: rationally connected variety, moduli of vector bundles}

\sloppy

\maketitle

\bigskip

\section{Introduction}
\label{section:f}

\refstepcounter{equation}
\subsection{}
\label{subsection:f-1}

Let $V$ be a rationally connected projective variety, not
necessarily smooth or normal, defined over $\com$. Then any smooth
$V$ is covered by various images of morphisms $f: \p^1\map V$ such
that the pullback $f^*T_V$ is ample (see e.g. \cite[Theorem
3.7]{kol}). One also observes that the image $f(\p^1)$ is
\emph{smooth} in this case for a generic choice of $f$ (see
\cite[Theorem 3.14]{kol}).

The present paper grew out of an attempt to understand whether the
preceding property holds for an arbitrary $V$ as well (see
\cite{kebekus-1}, \cite{kebekus-2} for some related results).
Namely, if $V$ is \emph{normal} and rationally connected, is there
always at least one smooth rational curve on $V$? Or more
generally, i.e. dropping the normality assumption, are there such
rationally connected $V$ that don't contain any smooth rational
curves, with $\dim V$ being arbitrarily large?

In the non-normal case, the answer is evident when $\dim V = 1$
(normalization), although even for the product $V \times V$ we are
not sure how to proceed (there might exist $\p^1 \subset V \times
V$ with degree $>1$ projection onto $V$). So the assumption $\dim
V \to \infty$ makes the problem more interesting (we refer to
Section~\ref{section:dis} for further variations).

Our main result treats the normal case as follows:

\begin{theorem}
\label{theorem:main-theorem} There exists normal rationally
connected variety $V$ without smooth rational curves (we call such
$V$ \emph{weird}) and with $\dim V$ arbitrarily large.
\end{theorem}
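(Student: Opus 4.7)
The plan is to produce $V$ by an explicit construction in which severe, yet controlled, singularities force every rational curve on $V$ to inherit them. The goal decomposes into two requirements: (i) $V$ is normal and rationally connected, and (ii) every non-constant morphism $f:\p^1\map V$ has singular image.

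The first step is to build a weird variety $V_0$ in some fixed, relatively small dimension, using the moduli-of-bundles setup advertised in the paper's keywords. Concretely, take a smooth projective curve $C$ of sufficiently large genus and consider a moduli space $M$ of semistable rank-$r$ degree-$d$ vector bundles on $C$ with $\gcd(r,d)>1$. Such $M$ is normal, projective, and rationally connected, with singular locus precisely the strictly semistable locus. The base case reduces to two checks: (a) the analytic singularity type along $\mathrm{Sing}(M)$ is such that no smooth $\p^1$ can pass through a point of $\mathrm{Sing}(M)$, a statement about the tangent cone and its normal form; and (b) every non-constant morphism $\p^1\map M$ whose image avoids $\mathrm{Sing}(M)$ either does not exist, or degenerates back into $\mathrm{Sing}(M)$. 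The latter is a deformation-theoretic statement about $\p^1$-families of stable bundles on $C$.

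To raise the dimension, naive products $V_0\times W$ fail since fibers $\{v_0\}\times W$ contribute many smooth rational curves. Instead, I would use a twisted construction: either a moduli of bundles on $C$ whose parameter space is enlarged by additional data (e.g.\ parabolic structure or Hecke modification taking values in $V_0$), or a generalized cone over $V_0$ engineered so that every fiber rational curve runs into the vertex along $\mathrm{Sing}(V_0)$. In either setup, a smooth rational curve in the new $V$ would have to project to a rational curve in $V_0$ or lie in a fiber structured to forbid smoothness, both contradicting the base case. Iterating this procedure yields $V$ of arbitrarily large dimension.

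The main technical obstacle is item (ii) for curves avoiding $\mathrm{Sing}(V)$: a priori a smooth $\p^1\subset V^{\mathrm{sm}}$ would evade any purely local argument performed at $\mathrm{Sing}(V)$. Ruling such curves out will require either a global obstruction, e.g.\ positivity properties of a divisor class on $V^{\mathrm{sm}}$ or a direct analysis of the space of morphisms $\p^1\to V$, or a degeneration argument forcing every $\p^1$-family to specialize into $\mathrm{Sing}(V)$, followed by a local analysis showing that the limiting curve is singular at the specialization point. Combining a sharp local singularity model with such a global degeneration argument is where the real work of the proof will lie.
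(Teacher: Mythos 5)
Your base case fails at step (a)/(b): moduli spaces of semistable bundles on a curve are \emph{not} weird, whether or not $\gcd(r,d)>1$. They contain many smooth rational curves passing through general (stable, hence smooth) points --- the paper's own Example~\ref{example:all-moduli-dont-work} exhibits entire $\p^{r-1}$'s embedded in $\mathcal{SU}_X(r)$ via Hecke modifications, and the Hecke curves through a general stable bundle are smooth. So your claim that every non-constant $f:\p^1\map M$ avoiding $\mathrm{Sing}(M)$ ``either does not exist or degenerates back into $\mathrm{Sing}(M)$'' is false, and no local analysis of the tangent cone along the strictly semistable locus can repair this, because the curves to be excluded never meet that locus. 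More generally, your guiding mechanism --- that singularities of $V$ should force every rational curve to be singular --- is not the one that can work here, and it is not the one the paper uses.

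The idea you are missing is to pass to a proper sublocus on which the uniformization of the moduli space becomes \emph{canonical}. The paper takes $\mathcal{D}\subset\mathcal{SU}_X(r)$ to be the locus of completely reducible (polystable, $C^{\infty}$-trivial) bundles; via the loop-group presentation $\mathcal{SU}_X(r)=\slg_r(\mathcal{O}_{X^*})\setminus\slg_r(K)/\slg_r(\mathcal{O})$, each point of $\mathcal{D}$ determines a \emph{unique} lattice basis and a unique matrix $\gamma\in\slg_r(\com[[t]])$ with $\gamma=I\bmod t^2$ (Proposition~\ref{theorem:b-s-basis-for-lambda}, using the unique flat unitary connection). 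A smooth rational curve $Z\subset\mathcal{D}$ would then lift, via this canonical section, to a non-constant algebraic family of matrices in $\slg_r(K)$ parameterized by $Z=\p^1$, i.e.\ to a non-constant everywhere-regular function on $\p^1$ --- absurd. Rational connectedness of $\mathcal{D}$ comes for free from taking affine lines $(1-x)\gamma+x\gamma'$ in the (ind-affine) space of such matrices, and normality/projectivity from the ind-scheme structure. Finally, your elaborate dimension-raising constructions (twisted products, cones, parabolic enhancements) are unnecessary and unproven; the paper simply lets the genus grow, giving $\dim\mathcal{D}\ge(r^2-1)(g-1)-r^2$. As written, your proposal has no viable route to requirement (ii), which is the heart of the theorem.
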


Let us briefly outline the strategy of the proof of
Theorem~\ref{theorem:main-theorem}.

\refstepcounter{equation}
\subsection{}
\label{subsection:f-2}

To make the construction easier we'd like to consider those $V$
that parameterize certain geometric objects. This should, in
principle, allow one interpret smoothness of any rational curve $Z
\subset V$ in terms of properties of the corresponding family of
objects. We've given our preference to the moduli spaces
$\mathcal{SU}_X(r)$ of rank $r
> 1$ and $\det = \mathcal{O}_X$ vector bundles over an algebraic curve $X$ of genus $g > 1$ (compare with \cite{castravet} for
another usage of $\mathcal{SU}_X(r)$ in birational geometry).

Recall that $\mathcal{SU}_X(r)$ is a \emph{Fano variety} (see
Section~\ref{section:p} for its specific properties). Yet,
unfortunately, it is too early to just set $V :=
\mathcal{SU}_X(r)$ and conclude the proof of
Theorem~\ref{theorem:main-theorem}:

\begin{ex}
\label{example:all-moduli-dont-work} Given any stable vector
bundle $E \in \mathcal{SU}_X(r)$, after twisting $E$ by
$\mathcal{O}_X(m)$ for some $m \in \cel$ (independent of $E$) one
may identify $E \otimes \mathcal{O}_X(m)$ with an extension class
from
$\text{Ext}^1(\mathcal{O}_X(rm),\mathcal{O}_X\otimes\com^{r-1}) =:
L$. This provides a rational dominant map $\p(L) \dashrightarrow
\mathcal{SU}_X(r)$ (see \cite[Proposition 7.9]{drez-nar} for
instance) and shows that $\mathcal{SU}_X(r)$ is in fact
\emph{unirational}. Now comes the discouraging observation that
$\mathcal{SU}_X(r)$ actually contains plenty of smooth rational
curves. Namely, starting with any point $p \in X$, some generic
vector bundle $\mathcal{E}$ with determinant $\det \mathcal{E} =
\mathcal{O}_X(p)$, and a linear form $l$ on the fiber
$\mathcal{E}_p$, we consider the morphism of sheaves $\mathcal{E}
\map \mathcal{O}_p$ corresponding to $l$. The kernel of this
morphism is again a rank $r$ vector bundle; it is stable due to
\cite[Lemma 5.2]{lange} and has trivial determinant. This shows
that the whole $\p^{r-1}\ni l$ embeds into $\mathcal{SU}_X(r)$
(cf. \cite[5.9]{nar-ram}).
\end{ex}

Recall that all the (equivalent) constructions of
$\mathcal{SU}_X(r)$ involve certain GIT quotient of a source space
$\mathbb{S}$ by some group $G$. This $\mathbb{S}$ can be the space
of relative Grassmannians over $X$ (resp. $G = \text{PGL}$), the
space of $\text{SU}_r(\com)$-representations of $\pi_1(X)$ (resp.
$G = \text{SU}$), the space of flat connections on a fixed rank
$r$ topologically trivial vector bundle $E$ over $X$ (resp. $G = $
the gauge group), etc. Our idea then was, starting with a smooth
rational curve $Z \subset \mathcal{SU}_X(r)$, lift it in a $1:1$
manner to $\mathbb{S}$ and obtain a contradiction with the fact
that $\mathbb{S}$ is affine or something like this.

However, as Example~\ref{example:all-moduli-dont-work} shows, this
idea won't work directly. The reason behind is that the claimed
``lifting to $\mathbb{S}$" doesn't exist: there's always an
ambiguity in the choice of a point $\in\mathbb{S}$ to associate
with any given point on $Z$. In order to circumvent this we use
another construction of $\mathcal{SU}_X(r)$ (compare with
\cite[5.1]{tyu}). Namely, after some care (see
Section~\ref{section:p}), one may take $\mathbb{S} = $ the group
of special $(r\times r)$-matrices with coefficients in a formal
power series ring (resp. $G = $ some ind-group). Then for a
particular locus $\mathcal{D}\subset\mathcal{SU}_X(r)$ (see
{\ref{subsection:tp-2}}), with any point on $\mathcal{D}$ one
associates (canonically) a point in $\mathbb{S}$, modulo some
moderate assumptions (cf.
Proposition~\ref{theorem:b-s-basis-for-lambda}).

\begin{remark}
\label{remark:rem-1} This $\mathcal{D}$ seems to be another
natural object, in addition to the theta-divisor $\mathcal{L}$
(see {\ref{subsection:p-3}} below), which comes for free with
$\mathcal{SU}_X(r)$. It would be interesting to explore the
relation between $\mathcal{D}$ and $\mathcal{L}$ further: whether,
say, rationality (resp. precise dimension) statement for
$\mathcal{L}$ (see e.g. \cite[3.1]{tyu}) holds also for
$\mathcal{D}$?
\end{remark}

One observes next that $\mathcal{D}$ is normal, projective and
rationally connected -- the facts we derive from the
ind-construction of $\mathcal{SU}_X(r)$. This readily shows that
there is no smooth rational $Z \subset \mathcal{D}$, since
otherwise it'll be lifted to $\mathbb{S} = $ direct limit of
affine varieties, which is impossible (see
Section~\ref{section:tp} for details).

\bigskip

\section{Preliminaries}
\label{section:p}

\refstepcounter{equation}
\subsection{}
\label{subsection:p-1}

Let the notation be as in {\ref{subsection:f-2}}. We first briefly
recall the ind-construction of the moduli spaces
$\mathcal{SU}_X(r)$ (see \cite{bea-las}, \cite{bea-las-sor},
\cite{faltings} for a complete account). For this we fix a point
$p \in X$, a small disk $\Delta \subset X$ around $p$ and a local
coordinate $z$ on $\Delta$ such that $z(p) = 0$. We also put $K :=
\com((z))$ and $\mathcal{O} := \com[[z]]$.

Let $E$ be a vector bundle on $X$ of rank $r$ and $\det E =
\mathcal{O}_X$. One proves by induction on $r$ that $E$ is trivial
over $X^* := X\setminus p$ (cf. \cite[Lemma 3.5]{bea-las}). Then
trivializing $E$ also over $\Delta$ provides an element
$\gamma\in\slg_r(K)$ whose class in the space $\mathcal{Q} :=
\slg_r(K)\slash\slg_r(\mathcal{O})$ uniquely determines $E$ up to
the left action on $\slg_r(K)$ of the subgroup
$\slg_r(\mathcal{O}_{X^*})$ (i.e. up to a choice of trivialization
for $E\big\vert_{X^*}$).

Equivalently, one may associate with $\gamma$ a \emph{special
lattice} $\Lambda\simeq\mathcal{O}^{\oplus r}$ (with
$\text{Vol}\,\Lambda := \det \gamma = 1$) generated as a
$\mathcal{O}$-module by global sections of $E\big\vert_{X^*}$ and
such that $E$ (or $\gamma$) is reconstructed from $\Lambda$ (see
\cite[Proposition 2.3]{bea-las}). In particular, one regards
$\mathcal{Q}$ as a collection of all such $\Lambda$ (with the
obvious $\slg_r(\mathcal{O}_{X^*})$-action) and represents it as a
direct limit of irreducible projective varieties
$\mathcal{Q}^{(N)},N\ge 0$, which consist of those $\Lambda$ that
have a basis $z^{d_i}e_i$ with $d_i\ge -N,\sum d_i = 0$, and
$e_i\in\mathcal{O}^r$ for all $1\le i\le r$ (see \cite[Theorem
2.5, Proposition 2.6]{bea-las}).

We will need the following simple observation:

\begin{lemma}[{cf. \cite[Lemma 4.5]{bea-las}}]
\label{theorem:can-take-gamma-id} With notation as above, every
$\slg_r(\mathcal{O}_{X^*})$-orbit of $\Lambda$ is represented by
some matrix in $\slg_r(\com[[z^{-1}]])$ (by definition $\Lambda$
is the image of $\gamma$ under the quotient morphism $\slg_r(K)
\map \mathcal{Q}$), so that $\Lambda$ admits a basis consisting of
vectors from $\com[[z^{-1}]]^r$. In particular, one may choose
$\gamma\in\slg_r(\com[[z^{-1}]])$ to satisfy $\gamma = I \mod
z^{-1}$, where $I$ is the identity matrix.
\end{lemma}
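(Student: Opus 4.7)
The plan is to reformulate the lemma as the double-coset identity
$\slg_r(K) = \slg_r(\mathcal{O}_{X^*})\cdot\slg_r(\com[[z^{-1}]])\cdot\slg_r(\mathcal{O})$
and then verify it by an approximation argument rooted in Riemann--Roch on $X$. Equivalently, I want to show that every $\slg_r(\mathcal{O}_{X^*})$-orbit on $\mathcal{Q}$ meets the image of $\slg_r(\com[[z^{-1}]])$ under the quotient map $\slg_r(K)\map\mathcal{Q}$, i.e.\ the ``opposite big cell'' of $\mathcal{Q}$.

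First I would fix a representative $\gamma\in\slg_r(K)$ and split each entry as $\gamma_{ij}=\gamma_{ij}^{-}+\gamma_{ij}^{+}$, with $\gamma_{ij}^{-}\in\com[z^{-1}]$ the polar part at $p$ and $\gamma_{ij}^{+}\in z\mathcal{O}$ the Taylor part vanishing at $p$. The goal is to eliminate every $\gamma_{ij}^{+}$ modulo left multiplication by $\slg_r(\mathcal{O}_{X^*})$ and right multiplication by $\slg_r(\mathcal{O})$. The key geometric input is Riemann--Roch: for every $n\gg 0$ the evaluation map $H^0(X,\mathcal{O}_X(np))\map z^{-n}\mathcal{O}/\mathcal{O}$ is surjective up to a $g$-dimensional obstruction coming from $H^1(X,\mathcal{O}_X)$, and this small obstruction can be absorbed via the column operations afforded by the $\slg_r(\mathcal{O})$-action. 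Applying this approximation order by order in the $z$-adic topology I would kill the Taylor coefficients of $\gamma$ one by one, while the ind-structure on $\mathcal{Q}$ (each stratum $\mathcal{Q}^{(N)}$ being a finite-dimensional projective variety) ensures that the process stabilizes within some $\mathcal{Q}^{(N)}$ and produces a genuine matrix in $\slg_r(\com[[z^{-1}]])$.

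The hard part, as I see it, is exactly this iterative approximation: one must handle the $g$-dimensional $H^1$-obstruction at each order without spoiling the previously corrected coefficients and without leaving the initially chosen stratum, which requires carefully coordinating the two one-sided actions across finite truncations. Once a representative $\gamma\in\slg_r(\com[[z^{-1}]])$ has been secured, the ``in particular'' statement follows almost at once: the constant term $\gamma_0:=\gamma\bmod z^{-1}$ lies in $\slg_r(\com)$ (it is a constant matrix of determinant one), and $\slg_r(\com)$ embeds naturally in $\slg_r(\mathcal{O}_{X^*})$ as the subgroup of constant matrices; left-multiplying by $\gamma_0^{-1}$ therefore replaces $\gamma$ by a representative congruent to $I$ modulo $z^{-1}$, as required.
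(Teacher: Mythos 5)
The paper does not actually prove this lemma --- it is stated as a ``simple observation'' with a pointer to \cite[Lemma 4.5]{bea-las} --- so your proposal has to be measured against the standard argument there. Your framing is the right one: the lemma is equivalent to the double-coset statement that every $\slg_r(\mathcal{O}_{X^*})$-orbit on $\mathcal{Q}$ meets the opposite big cell, and the geometric input is indeed Riemann--Roch on $X$. Your final reduction is also correct: once $\gamma$ has entries with no positive powers of $z$, its value $\gamma_0$ at $z^{-1}=0$ lies in $\slg_r(\com)\subset\slg_r(\mathcal{O}_{X^*})$ and left division by $\gamma_0$ gives $\gamma\equiv I \bmod z^{-1}$. (One side remark: your argument, if completed, produces $\gamma\in\slg_r(\com[z^{-1}])$, i.e.\ Laurent \emph{polynomials} in $z^{-1}$, which is the form of the statement in \cite{bea-las}; the paper's $\com[[z^{-1}]]$ is not literally a subring of $K=\com((z))$, but that is an issue with the statement, not with your proof.)

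The genuine gap is the iterative scheme, which is precisely the step you flag as ``the hard part'' and then do not carry out --- and it cannot be carried out as described. First, the only element of $\slg_r(\mathcal{O}_{X^*})$ congruent to $I$ modulo $z^{k}$ for $k\ge 1$ is $I$ itself: an entry of $\alpha-I$ would be a function regular on $X^*$ and vanishing at $p$, hence a global regular function on $X$ vanishing somewhere, hence $0$. So the left action admits no filtration adapted to the $z$-adic order of the Taylor coefficients, and every nontrivial left correction disturbs \emph{all} previously fixed coefficients at once; ``killing the Taylor coefficients one by one without spoiling the earlier ones'' cannot even be set up. Second, the right $\slg_r(\mathcal{O})$-action cannot absorb the discrepancy on its own: by Birkhoff factorization $\slg_r(\com[z^{-1}])\cdot\slg_r(\mathcal{O})$ is a proper open subset of $\slg_r(K)$, so no sequence of column operations over $\mathcal{O}$ moves a general $\gamma$ into the big cell, and it is unclear how such operations would see the $H^1(X,\mathcal{O}_X)$-obstruction at all. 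Third, the appeal to the ind-structure to get stabilization is unfounded: an infinite product in the ind-group $\slg_r(\mathcal{O}_{X^*})$ converges only if almost all factors equal $I$, which by the first point forces the process to terminate after finitely many steps --- exactly what has not been shown. The honest proof is not an infinite iteration but a single global step: interpret $\Lambda$ as a pair $(E,\rho)$ with $E$ trivial on $X^*$, twist by $\mathcal{O}_X(np)$ for \emph{one} sufficiently large $n$ so that Serre vanishing and global generation hold, and use global sections of the twist to produce a new trivialization of $E$ over $X^*$ whose transition matrix against the formal trivialization at $p$ lies in $\slg_r(\com[z^{-1}])$; openness of the big cell then finishes the argument. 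As written, your proposal defers the entire content of the lemma to this unexecuted step.
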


\refstepcounter{equation}
\subsection{}
\label{subsection:p-2}

Further, recall that there's a \emph{canonical} isomorphism in
codimension $1$ $\mathcal{SU}_X(r) \simeq
\slg_r(\mathcal{O}_{X^*})\setminus\mathcal{Q}$, where
$\mathcal{SU}_X(r)$ (resp.
$\slg_r(\mathcal{O}_{X^*})\setminus\mathcal{Q}$) is considered as
an algebraic (resp. quotient) stack (see \cite[Proposition 3.4,
Lemma 8.2]{bea-las} and Remark~\ref{remark:sl-x-orbit} below). In
addition, the group $\slg_r(\mathcal{O}_{X^*})$ also carries the
structure of an integral ind-scheme, so that its action on
$\mathcal{Q} = \varinjlim \mathcal{Q}^{(N)}$ is compatible with
the underlying ind-structure (cf. \cite[Proposition
6.4]{bea-las}).

Altogether this yields an ind-structure on $\mathcal{SU}_X(r)$, so
that $\mathcal{SU}_X(r)\subset
\slg_r(\mathcal{O}_{X^*})\setminus\mathcal{Q}$ as an open
ind-subscheme with codimension $>1$ complement, and an ample line
bundle $\mathcal{L}\big\vert_{\mathcal{SU}_X(r)}$ with the
pullback $\pi^*\mathcal{L}$ under the natural projection $\pi:
\mathcal{Q}\map \slg_r(\mathcal{O}_{X^*})\setminus\mathcal{Q}$
being some $\slg_r(\mathcal{O}_{X^*})$-invariant (determinant)
line bundle (see \cite[Sections 5, 7, 8]{bea-las}). The global
sections from $H^0(\mathcal{SU}_X(r),\mathcal{L}^c)$ (\emph{the
space of conformal blocks}), $c\in\cel$, coincide by construction
with the $\slg_r(\mathcal{O}_{X^*})$-invariant global sections of
$\pi^*\mathcal{L}^c$ (see \cite[Theorem 8.5]{bea-las}). We'll
mention a few more properties of $\mathcal{L}$ in
{\ref{subsection:p-3}}.

\begin{remark}
\label{remark:sl-x-orbit} Using the (mappings by) global sections
of $\mathcal{L}^c,c\gg 1$, one may regard both $\mathcal{SU}_X(r)$
and $\slg_r(\mathcal{O}_{X^*})\setminus\mathcal{Q}$ as essentially
the same \emph{scheme} with two complementary structures: of a
quasi-projective variety and an ind-scheme. In particular, any
open (resp. closed) ind-subscheme of
$\slg_r(\mathcal{O}_{X^*})\setminus\mathcal{Q}$ corresponds,
tautologically, to an open (resp. closed) subscheme in
$\mathcal{SU}_X(r)$, and vice versa (compare with the proof of
Theorem 7.7 in \cite{bea-las}).
\end{remark}

\refstepcounter{equation}
\subsection{}
\label{subsection:p-3}

To conclude this section, let's fix a bit more of
notation/conventions and recall some auxiliary facts, as those
will be used later in Section~\ref{section:tp}.

Choose some semi-stable bundle $E\in\mathcal{SU}_X(r)$ (identified
with a point in the corresponding moduli space) with a Hermitian
structure specifying the gauge group action on the space of all
connections on $E$. Using \cite[3.2]{tyu} (cf. \cite[Lemma
4.2]{drez-nar}) and the main result of \cite{don} (cf.
\cite{nar-sesh}) one represents $E$ as a sum of stable bundles.

Note that any two direct sum decompositions $\oplus E_i = E =
\oplus E'_i$ differ only by the order of summands. Indeed,
otherwise arguing by induction on $r$ one obtains a surjection
$E_i \map E'_i$ between two stable bundles $E_i \not\simeq E'_i$
of degree $0$, which is impossible. This leads to the following:

\begin{theorem}
\label{theorem:un-nabla-ss-e} There exists a \emph{unique} (up to
$\text{Aut}\,E$) flat unitary connection $\nabla$ on $E$.
\end{theorem}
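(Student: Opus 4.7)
The plan is to deduce the theorem in two steps: first extract existence from the Donaldson / Narasimhan--Seshadri correspondence already invoked in the preceding paragraph, then bootstrap uniqueness from the just-established uniqueness of the stable direct-sum decomposition of $E$.

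For existence, one uses the fact that the point $E\in\mathcal{SU}_X(r)$ can be represented by the polystable bundle $E=\oplus E_i$, where each $E_i$ is stable of degree $0$. Applying the theorem of Narasimhan--Seshadri (in Donaldson's differential-geometric form as cited) to each stable summand produces a flat unitary connection $\nabla_i$ on $E_i$; the direct sum $\nabla:=\oplus\nabla_i$ is then a flat unitary connection on $E$ compatible with the given Hermitian structure.

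For uniqueness, suppose $\nabla$ and $\nabla'$ are two flat unitary connections on $E$. Fix a basepoint $x_0\in X$; holonomy produces homomorphisms $\rho,\rho':\pi_1(X,x_0)\map \slg_r(\com)$ taking values in $\mathrm{SU}(r)$ (the latter because $\det E=\mathcal{O}_X$ and the connections are unitary). Decomposing into irreducibles, $\rho=\oplus\rho_i$ and $\rho'=\oplus\rho'_j$, the Narasimhan--Seshadri correspondence translates these into two polystable decompositions $E=\oplus E_i=\oplus E'_j$ into stable summands. By the uniqueness statement just proved preceding the theorem, there is a permutation $\sigma$ and holomorphic isomorphisms $\phi_i:E_i\xrightarrow{\sim}E'_{\sigma(i)}$. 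Since each $E_i$ is stable, Schur's lemma upgrades $\phi_i$ to an intertwiner $\rho_i\sim\rho'_{\sigma(i)}$ (unique up to scalar); assembling the $\phi_i$ gives a holomorphic automorphism $\phi\in\au\,E$ with $\phi^*\nabla'=\nabla$, which is the required uniqueness.

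The main obstacle is the standard nuisance of isotypic components: when several of the $E_i$ are isomorphic as holomorphic bundles, the corresponding irreducible representations $\rho_i$ coincide, and one must observe that the freedom to mix them by $\mathrm{GL}(m,\com)$ inside $\au\,E$ (where $m$ is the multiplicity) exactly matches the ambiguity in choosing the intertwiners $\phi_i$. Once this is tracked, the assembly of the $\phi_i$ into a single element of $\au\,E$ is automatic, and the theorem follows.
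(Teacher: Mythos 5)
Your proposal is correct and follows essentially the same route as the paper, which derives the theorem from the Narasimhan--Seshadri/Donaldson correspondence applied to the stable summands together with the uniqueness (up to reordering) of the decomposition $E=\oplus E_i$ established just before the statement. Your write-up simply makes explicit the holonomy/Schur's-lemma bookkeeping (including the isotypic-multiplicity point) that the paper leaves implicit in its one-line ``This leads to the following.''
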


Finally, although we'll not quite need this, let us mention for
consistency some other facts about $\mathcal{SU}_X(r)$ (see also
Section~\ref{section:dis}). First of all, variety
$\mathcal{SU}_X(r)$ is \emph{locally factorial}, with Picard group
generated by $\mathcal{L}$ (see \cite[Theorems A,\,B]{drez-nar}).
One may equivalently interpret $\mathcal{L}$ as the so-called
\emph{theta-divisor}. The latter consists of all
$E\in\mathcal{SU}_X(r)$ for which $H^0(X,E\otimes\xi)\ne 0$ with
respect to some fixed cycle $\xi$ on $X$ of degree $g - 1$.
Furthermore, the canonical class $K_{\mathcal{SU}_X(r)}$ equals
$-2r\mathcal{L}$, so that $\mathcal{SU}_X(r)$ is a Fano variety.

\bigskip

\section{Proof of Theorem~\ref{theorem:main-theorem}}
\label{section:tp}

\refstepcounter{equation}
\subsection{}
\label{subsection:tp-1}

We retain the notation of Section~\ref{section:p}. Put $t :=
z^{-1}$ and let $\gamma$ (resp. $\Lambda$) be as in
Lemma~\ref{theorem:can-take-gamma-id}. Recall that
$E\big\vert_{\Delta} = \com^r\times\Delta$ with constant basis,
while the $\mathcal{O}_{X^*}$-module $E\big\vert_{X^*}$ is
generated by $\Lambda$, so that the two data are glued over
$\Delta\cap X^*$ via $\gamma$. We will additionally assume that
$\gamma = I \mod t^2$.

To apply the strategy outlined in {\ref{subsection:f-2}} one
should have (at least) the following:

\begin{prop}
\label{theorem:b-s-basis-for-lambda} In the previous setting,
$\Lambda$ carries a \emph{unique} (Bohr-Sommerfeld) basis, which
does not depend on the choice of $\gamma$. In fact, there is only
one $\gamma$ associated with $\Lambda$ (cf.
{\ref{subsection:p-1}}), satisfying $\gamma = I \mod t^2$.
\end{prop}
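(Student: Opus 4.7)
My plan is to first establish uniqueness of $\gamma$ satisfying $\gamma \equiv I \pmod{t^2}$, from which the Bohr--Sommerfeld basis of $\Lambda$ is defined as the canonical frame $\gamma^{-1}e_1,\ldots,\gamma^{-1}e_r$ (with $e_i$ the standard basis of $\mathcal{O}^r$); its independence from any prior choice of $\gamma$ is then tautological.

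To prove uniqueness, I would assume $\gamma_1,\gamma_2 \in \slg_r(\com[[t]])$ both satisfy $\gamma_i \equiv I \pmod{t^2}$ and represent the same $\Lambda$. By the definition of the $\slg_r(\mathcal{O}_{X^*})$-orbit, there exist $g \in \slg_r(\mathcal{O}_{X^*})$ and $h \in \slg_r(\mathcal{O})$ with $g \gamma_1 h = \gamma_2$ in $\slg_r(K)$. Rewriting as $g = \gamma_2 h^{-1} \gamma_1^{-1}$ and expanding in Laurent series at $p$, using that $\gamma_1^{-1}, \gamma_2 \equiv I \pmod{z^{-2}}$ and $h^{-1}$ has only non-negative powers of $z$, one extracts a hierarchy of algebraic equations between the coefficients of $g, h, \gamma_i$; at leading order, $g \equiv h^{-1} \pmod{z^{-2}}$ at $p$.

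The decisive step is to invoke that $g$ is globally regular on $X^*$ while $h^{-1}$ is regular at $p$, combined with the elementary rigidity $\slg_r(\mathcal{O}_{X^*}) \cap \slg_r(\mathcal{O}) = \slg_r(\com)$ (which reflects $H^0(X,\mathcal{O}_X) = \com$). Propagating the leading coincidence $g \equiv h^{-1}$ to all orders through the iterative equations forces $g$ and $h^{-1}$ to be the same constant matrix $g_0 \in \slg_r(\com)$, whence $\gamma_2 = g_0 \gamma_1 g_0^{-1}$ realizes $\gamma_2$ as a constant conjugate of $\gamma_1$; the choice of the ambient trivialization of $E|_\Delta$ used to define $\gamma$ pins $g_0$ down to the identity and gives $\gamma_1 = \gamma_2$.

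The main obstacle is making this propagation precise: each higher-order coefficient of the equation $g = \gamma_2 h^{-1}\gamma_1^{-1}$ couples $\gamma_i, g, h$ through nontrivial polynomial combinations, and one must verify that the cumulative system genuinely cuts $g$ down to a scalar rather than, say, a polynomial in $z^{-1}$. A possibly cleaner alternative is to invoke Theorem~\ref{theorem:un-nabla-ss-e}: the flat unitary connection $\nabla$, unique up to $\au\,E$, determines a canonical horizontal frame of $E$ over $\Delta$ (rigid up to a constant matrix), and the Bohr--Sommerfeld basis can be identified with this frame, with $\gamma \equiv I \pmod{t^2}$ being the corresponding transition matrix; the rigidity of $\nabla$ then directly yields the rigidity of $\gamma$.
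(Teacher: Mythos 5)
Your primary route (formal manipulation of the relation $g\gamma_1 h=\gamma_2$) is not the paper's argument, and as written it has a genuine gap precisely at the step you flag. To invoke $\slg_r(\mathcal{O}_{X^*})\cap\slg_r(\mathcal{O})=\slg_r(\com)$ you must first know that $g$ is regular at $p$, and this is exactly what the unperformed ``propagation to all orders'' would have to deliver: the congruences $\gamma_i\equiv I \bmod t^2$ do not interact well with multiplication by $h^{-1}\in\slg_r(\com[[z]])$, since $t^2\mathrm{M}_r(\com[[t]])\cdot \mathrm{M}_r(\com[[z]])$ is not contained in any useful congruence ideal, so even the ``leading order'' identity $g\equiv h^{-1}$ does not typecheck. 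Note also that the statement concerns a fixed lattice $\Lambda$ (a single point of $\mathcal{Q}=\slg_r(K)/\slg_r(\mathcal{O})$), so only the right $\slg_r(\mathcal{O})$-ambiguity $h$ is in play; the left $\slg_r(\mathcal{O}_{X^*})$-ambiguity you introduce changes $\Lambda$ and is treated separately in the paper (Remark~\ref{remark:con-from-th}). Finally, the basis of $\Lambda$ should be the columns of $\gamma$ (the lattice generators), not $\gamma^{-1}e_i$.

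Your one-sentence ``cleaner alternative'' is in fact the paper's actual proof, but it omits the three mechanisms that make it work. The paper takes the frame $\{e_i\}$ given by $\gamma$, observes that $\gamma\equiv I\bmod t^2$ forces the \emph{canonical} initial conditions $e_i(0)=$ standard basis and $de_i(0)=0$, and then parallel-transports with the flat unitary connection $\nabla$ of Theorem~\ref{theorem:un-nabla-ss-e}; flatness plus these initial data make the resulting sections $\varepsilon_i$ independent of the choice of $\gamma$, the identity $\nabla^{0,1}=\bar{\partial}$ makes them holomorphic (entries in $\com[[t]]$), and the residual ambiguity of $\nabla$ up to $\au\,E$ is killed because $g\gamma g^{-1}=\gamma$ for $g\in\au\,E$. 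Saying the horizontal frame is ``rigid up to a constant matrix'' and that the trivialization of $E\vert_\Delta$ ``pins $g_0$ down'' skips precisely the role of the normalization $\gamma\equiv I\bmod t^2$ and of the $\au\,E$-invariance; without these the argument does not close.
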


\begin{proof}
According to our assumption there exists a collection of vectors
$\{e_1,\ldots,e_r\}\subset\com[[t]]^r$ that generates
$E\big\vert_{X^*}$ and coincides with the standard basis of
$\com^r$ modulo $t^2$. Using this, the flat unitary connection
$\nabla$ on $E$ (see Theorem~\ref{theorem:un-nabla-ss-e}), plus
the preceding description of $E$ in terms of $\Delta,X^*$ and
$\gamma$, we construct (via the parallel transport starting with
$e_i(0)$) global $C^{\infty}$-sections $\varepsilon_i$ of $E,1\le
i\le r$, which generate $E\big\vert_{X^*}$ and satisfy the
equation $\nabla\varepsilon_i = 0$ in a small disk $\Delta_0
\subset X^*$ around $t = 0$.

More precisely, since $\nabla$ is flat and $de_i(0) =
0$,\footnote{Here $d$ is the usual K\"ahlher differential of the
ring $\com[[t]]$.} equation $\nabla\varepsilon_i = 0$ and its
solutions $\{\varepsilon_1,\ldots,\varepsilon_r\}$ do not depend
on the choice of $e_i$ (aka $\gamma$). In particular, there is a
correctly defined (canonical) extension of every $\varepsilon_i$
over the entire $X$, as claimed. We also observe that by
uniqueness of the solutions all entries in $\varepsilon_i$ are
some elements from $\com[[t]]$ because $\nabla^{0,1} =
\bar{\partial}$ locally on $\Delta_0$. This gives the desired
basis for $\Lambda$.

Recall next that $\nabla$ is unique up to $\text{Aut}\,E$. Now the
last claim of proposition follows from the fact that one may take
$\{\varepsilon_1,\ldots,\varepsilon_r\}$ to be the columns of
$\gamma$ and that $g \gamma g^{-1} = \gamma$ for any $g \in
\text{Aut}\,E$ by definition.
\end{proof}

\begin{remark}
\label{remark:con-from-th} (Notation as in the proof of
Proposition~\ref{theorem:b-s-basis-for-lambda}) The sections
$\varepsilon_i$ provide an isomorphism $E \simeq
\mathcal{O}_X^{\oplus r}$ of $C^{\infty}$-bundles.\footnote{The
bundle $\mathcal{O}_X^{\oplus r}$ carries a natural Hermitian
structure which varies together with $E$.} Let $\mathcal{D}
\subset \mathcal{SU}_X(r)$ be the moduli space of all
\emph{completely reducible} flat connections on
$\mathcal{O}_X^{\oplus r}$. Then conversely, for any
$C^{\infty}$-trivial $E \in \mathcal{D}$ the corresponding
$\Lambda,\gamma$ can be chosen to satisfy $\gamma = I \mod t^2$.
Indeed, any such $E$ is generated by $r$ $\nabla$-flat
$C^{\infty}$-sections $\varepsilon_i$, obtained from some constant
sections $\varepsilon^0_i$ of $\mathcal{O}_X^{\oplus r}$ via
fiberwise (gauge) transformation. Locally on $\Delta_0$ one has
$\varepsilon_i = \varepsilon^0_i$ modulo $(t,\bar{t})^2$ by
construction, which implies that $d\varepsilon_i(0) = 0 =
\nabla\varepsilon_i$. Then all $\varepsilon_i$ depend only on $t$
due to $\nabla^{0,1} = \bar{\partial}$ and uniqueness of the
solutions. Note that this construction of $\varepsilon_i$ requires
$\Delta_0\ni 0$ to be \emph{fixed}. It also shows that any other
lattice $g\cdot\Lambda$, for $g\in\slg_r(\mathcal{O}_{X^*})$, has
the corresponding $g\gamma g^{-1}$ equal to $I \mod t^2$ as well.
Indeed, recall that the condition for $\varepsilon_i$ to be
$\nabla$-flat translates into $\varepsilon_i = \varepsilon^0_i$
modulo $(t,\bar{t})^2$, which must be gauge invariant.
\end{remark}

\refstepcounter{equation}
\subsection{}
\label{subsection:tp-2}

The locus $\mathcal{D}\subset \mathcal{SU}_X(r)$ from
Remark~\ref{remark:con-from-th} is our candidate for the weird
rationally connected variety $V$. Let's study the geometry of
$\mathcal{D}$ more closely by employing its description in terms
of the lattices $\Lambda$. But first we make the following:

\begin{assumption}
Fix $r\ge 3$ and choose the initial curve $X$ to be generic of
genus $\ge 3$. This gives $\au\,E = \com^*$ for generic
$E\in\mathcal{D}$ (take for instance $E = $ the direct sum of
different line bundles $\delta_i \ne \mathcal{O}_X$ satisfying
$\deg \delta_i = 0$ and $\delta_i \ne -\delta_j$ for all $i,j$).
\end{assumption}

We will write $\mathcal{SU}_X(r) =
\slg_r(\mathcal{O}_{X^*})\setminus\mathcal{Q}$ in what follows
(cf. {\ref{subsection:p-2}} and Remark~\ref{remark:sl-x-orbit}).
This aims to just simplify the notation and won't cause any loss
of generality.

Consider the subset $\mathcal{D}_0\subset\mathcal{Q}$ of all
lattices $\Lambda$ satisfying $\gamma = I \mod t^2$ as in
{\ref{subsection:tp-1}}. Let also
$\mathcal{M}\subset\slg_r(\com[[t]])$ be the locus of all matrices
$I + \displaystyle\sum_{i=2}^{\infty}t^i\Theta_i$ for various
$\Theta_i\in\text{M}_r(\com)$. It follows from the previous
constructions that $\mathcal{M}$ maps onto $\mathcal{D}_0$ under
the quotient morphism $\slg_r(K)\map\mathcal{Q} =
\slg_r(K)\slash\slg_r(\mathcal{O})$.

\begin{lemma}
\label{theorem:d-o-q} $\mathcal{D}_0\subset\mathcal{Q}$ is a
$\slg_r(\mathcal{O}_{X^*})$-invariant closed ind-subscheme.
\end{lemma}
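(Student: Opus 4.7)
The plan is as follows. The lemma asserts two things: (i) invariance of $\mathcal{D}_0$ under the left $\slg_r(\mathcal{O}_{X^*})$-action on $\mathcal{Q}$; and (ii) closedness of $\mathcal{D}_0$ as an ind-subscheme, i.e.\ that each $\mathcal{D}_0 \cap \mathcal{Q}^{(N)}$ is closed in $\mathcal{Q}^{(N)}$. For (i) I would simply invoke the closing lines of Remark~\ref{remark:con-from-th}: the condition $\gamma = I \mod t^2$ admits a manifestly gauge-invariant reformulation, namely $\varepsilon_i = \varepsilon^0_i$ modulo $(t,\bar t)^2$ on the Bohr--Sommerfeld flat sections, and conjugating $\gamma$ by any $g \in \slg_r(\mathcal{O}_{X^*})$ produces $g\gamma g^{-1}$ with the same property; equivalently, $g\cdot \Lambda \in \mathcal{D}_0$ whenever $\Lambda \in \mathcal{D}_0$.

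For (ii) my strategy is to exhibit $\mathcal{D}_0 \cap \mathcal{Q}^{(N)}$ as the vanishing locus of finitely many algebraic equations on $\mathcal{Q}^{(N)}$. By Proposition~\ref{theorem:b-s-basis-for-lambda}, each $\Lambda \in \mathcal{D}_0$ admits a unique representative $\gamma \in \mathcal{M}$, so that $\gamma \mapsto \gamma\cdot \mathcal{O}^r$ gives a bijection $\mathcal{M}\map \mathcal{D}_0$. Setting $\mathcal{M}^{(N)} := \mathcal{M} \cap \pi^{-1}(\mathcal{Q}^{(N)})$, the boundedness built into the definition of $\mathcal{Q}^{(N)}$ (cf.\ \cite[Theorem 2.5, Proposition 2.6]{bea-las}) confines $\mathcal{M}^{(N)}$ to a finite-dimensional affine subvariety of matrices of the form $I + \sum_{i=2}^{M(N)} t^i \Theta_i$ with $\det = 1$. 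On the open ``big cell'' $\mathcal{U}^{(N)} \subset \mathcal{Q}^{(N)}$ of lattices admitting some representative in $\slg_r(\com[[t]])$, there is an algebraic Iwasawa-type morphism to $\slg_r(\com[[t]])/\slg_r(\com)$; normalizing the constant term $A_0 = I$ via the residual $\slg_r(\com)$-ambiguity, the condition $\gamma = I \mod t^2$ becomes the vanishing of the $t^1$-coefficient $A_1$, which is plainly a closed algebraic condition on $\mathcal{U}^{(N)}$.

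The main obstacle, as I see it, is to rule out potential limit points of $\mathcal{D}_0^{(N)}$ in the complement $\mathcal{Q}^{(N)}\setminus \mathcal{U}^{(N)}$: the argument above only yields closedness inside the big cell. To handle this I would use the invariance from (i): then $\mathcal{D}_0$ is the full preimage under $\pi:\mathcal{Q}\map \slg_r(\mathcal{O}_{X^*})\setminus \mathcal{Q}\simeq \mathcal{SU}_X(r)$ (cf.\ Remark~\ref{remark:sl-x-orbit}) of the moduli locus $\mathcal{D}\subset \mathcal{SU}_X(r)$ of bundles admitting a completely reducible flat connection on their $C^\infty$-trivialization $\mathcal{O}_X^{\oplus r}$. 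Complete reducibility being a semisimplicity condition stable under specialization, $\mathcal{D}$ is closed in $\mathcal{SU}_X(r)$ by standard moduli theory of flat connections; pulling back along $\pi$ recovers $\mathcal{D}_0 = \pi^{-1}(\mathcal{D})$ as a closed ind-subscheme of $\mathcal{Q}$, as required.
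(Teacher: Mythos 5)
Your treatment of the invariance coincides with the paper's (both simply quote the closing lines of Remark~\ref{remark:con-from-th}), and your big-cell computation is a reasonable algebraic substitute for the paper's identification of $\mathcal{D}_0$ with $\mathcal{M}$ via Proposition~\ref{theorem:b-s-basis-for-lambda}. The paper, however, argues closedness by an analytic limit argument: a bounded Cauchy sequence $\Lambda_i\in\mathcal{D}_0$ determines a sequence $\gamma_i\in\mathcal{M}$ whose limit lies in $\mathcal{M}$ and maps to $\lim\Lambda_i$, whence $\mathcal{D}_0$ is analytically closed, and one then passes to the ind-structure $\mathcal{Q}=\varinjlim\mathcal{Q}^{(N)}$. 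You correctly isolate the real difficulty --- possible limit points outside the big cell, which is precisely what the paper's word ``bounded'' is covering --- but your resolution of it does not work.

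The step that fails is the appeal to ``$\mathcal{D}$ is closed in $\mathcal{SU}_X(r)$ by standard moduli theory of flat connections.'' First, this is circular relative to the logical order of the paper: the equality $\mathcal{D}=\pi(\mathcal{D}_0)=\slg_r(\mathcal{O}_{X^*})\setminus\mathcal{D}_0$ and every topological property of $\mathcal{D}$ (in particular that it is closed, hence projective, in Proposition~\ref{theorem:props-of-d}) are deduced \emph{from} Lemma~\ref{theorem:d-o-q}; at the point where the lemma is being proved, $\mathcal{D}$ is only the set introduced in Remark~\ref{remark:con-from-th}, with no known closedness. Second, the asserted principle is not standard and is false in general: complete reducibility of a flat connection is the ``closed orbit'' condition of GIT and is neither open nor closed under specialization in families. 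Reading $\mathcal{D}$ instead inside the coarse space $\mathcal{SU}_X(r)$, where every point already has a polystable representative, the only obviously closed candidate is the entire non-stable locus, whose codimension is far larger than the bound of Corollary~\ref{theorem:dim-of-d}, so that reading cannot be what is meant either. Finally, even granting closedness of $\mathcal{D}$, the identity $\mathcal{D}_0=\pi^{-1}(\mathcal{D})$ requires the inclusion $\pi(\mathcal{D}_0)\subseteq\mathcal{D}$, i.e. that $\gamma\equiv I\bmod t^2$ forces the flat unitary connection of $E$ to be completely reducible; Remark~\ref{remark:con-from-th} supplies only the opposite implication. So the complement-of-the-big-cell case --- the one genuine difficulty --- remains open in your argument.
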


\begin{proof}
Consider some $\Lambda\in\mathcal{D}_0$. Recall that $\Lambda$
uniquely determines the corresponding $\gamma\in\mathcal{M}$ (see
Proposition~\ref{theorem:b-s-basis-for-lambda}). Then, since the
group $\slg_r(\mathcal{O})$ acts freely on
$\slg_r(K)\supset\mathcal{M}$, the set $\mathcal{D}_0$ may be
identified with $\mathcal{M}$ near $\Lambda$.

In particular, any (bounded in analytic topology) Cauchy sequence
of lattices $\Lambda_i\in\mathcal{D}_0$ yields the corresponding
sequence of $\gamma_i\in\mathcal{M}$, as follows from the
definition of $\slg_r(K) \map \mathcal{Q}$. One then (obviously)
has a limit $\lim \gamma_i\in\mathcal{M}$ that maps to
$\lim\Lambda_i\in\mathcal{D}_0$. This shows that $\mathcal{D}_0$
is closed in analytic topology. Now using the representation
$\mathcal{Q} = \varinjlim \mathcal{Q}^{(N)}$ from
{\ref{subsection:p-2}} we obtain that
$\mathcal{D}_0\subset\mathcal{Q}$ is a closed ind-subscheme.

Finally, the $\slg_r(\mathcal{O}_{X^*})$-invariance of
$\mathcal{D}_0$ follows from Remark~\ref{remark:con-from-th},
which concludes the proof.
\end{proof}

From Lemma~\ref{theorem:d-o-q} we obtain
\begin{equation}
\nonumber \mathcal{D} = \pi(\mathcal{D}_0) =
\slg_r(\mathcal{O}_{X^*})\setminus\mathcal{D}_0
\end{equation}
for the natural $\slg_r(\mathcal{O}_{X^*})$-equivariant morphism
$\pi: \mathcal{Q} \map \mathcal{SU}_X(r)$ of ind-schemes.

\begin{prop}
\label{theorem:props-of-d} $\mathcal{D}$ is a normal, projective
and rationally connected variety.
\end{prop}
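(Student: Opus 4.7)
The plan is to combine three inputs: the identification of $\mathcal{D}_0$ with the locus $\mathcal{M}\subset\slg_r(\com[[t]])$ established in the proof of Lemma~\ref{theorem:d-o-q}, an explicit parametrization of $\mathcal{M}$ by an affine space, and the description $\mathcal{D} = \slg_r(\mathcal{O}_{X^*})\setminus\mathcal{D}_0$ together with Remark~\ref{remark:sl-x-orbit}.

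For \emph{projectivity}, Lemma~\ref{theorem:d-o-q} exhibits $\mathcal{D}_0$ as a closed $\slg_r(\mathcal{O}_{X^*})$-invariant ind-subscheme of $\mathcal{Q}$, so its image $\mathcal{D}$ under $\pi$ is a closed ind-subscheme of $\slg_r(\mathcal{O}_{X^*})\setminus\mathcal{Q}$. By Remark~\ref{remark:sl-x-orbit} this ind-subscheme corresponds to a closed subscheme of the projective variety $\mathcal{SU}_X(r)$, so $\mathcal{D}$ is projective.

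Next I would analyse the truncations $\mathcal{M}^{(N)} := \{\gamma = I + \sum_{i=2}^N t^i\Theta_i : \det\gamma \equiv 1 \mod t^{N+1}\}$. Expanding $\det\gamma - 1$, the coefficient of $t^k$ has the shape $\text{tr}(\Theta_k) + P_k(\Theta_2,\ldots,\Theta_{k-1})$ for a universal polynomial $P_k$; iteratively solving for $\text{tr}(\Theta_k)$ identifies $\mathcal{M}^{(N)}$ with an affine space, in particular smooth and rational. Since $\mathcal{D}$ is finite-dimensional by the previous paragraph, and since by Remark~\ref{remark:con-from-th} every point of $\mathcal{D}$ lifts to some $\gamma\in\mathcal{M}$, the surjective map $\mathcal{M}\to\mathcal{D}$ already factors through $\mathcal{M}^{(N_0)}$ for some finite $N_0$. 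Hence $\mathcal{D}$ is the image of an affine space under a regular morphism, so irreducible, unirational, and rationally connected.

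For \emph{normality}, I would use that $\mathcal{M}$ is smooth and that $\mathcal{D}$ is the quotient of $\mathcal{D}_0\simeq\mathcal{M}$ by the connected ind-group $\slg_r(\mathcal{O}_{X^*})$. By the Assumption, the stabilizer of a generic $\gamma$ is $\au E = \com^*$, which acts as $g\gamma g^{-1} = \gamma$ and hence trivially; at non-generic points the stabilizer remains reductive (a product of $\text{GL}$'s indexed by the block decomposition of $E$). A quotient of a smooth ind-variety by a reductive action has at worst rational (and thus normal) singularities, so $\mathcal{D}$ is normal. This last step I expect to be the main obstacle: making the reductive-quotient argument rigorous in the ind-scheme setting requires some care, and a backup would be to verify Serre's criterion $R_1 + S_2$ on $\mathcal{D}$ directly from smoothness of $\mathcal{M}^{(N_0)}$ together with the explicit form of the fibers of $\mathcal{M}^{(N_0)}\to\mathcal{D}$.
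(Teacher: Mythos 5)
There is a genuine gap, and it sits at the step you rely on for irreducibility, unirationality and rational connectedness: the claim that the surjection $\mathcal{M}\to\mathcal{D}$ ``factors through $\mathcal{M}^{(N_0)}$ for some finite $N_0$'' because $\mathcal{D}$ is finite-dimensional. Finite-dimensionality of the target does not make a map factor through a chosen finite-dimensional quotient of the source: for the factorization you would need every fiber of the truncation $\mathcal{M}\to\mathcal{M}^{(N_0)}$ (i.e.\ the set of $\gamma$ with fixed $\Theta_2,\dots,\Theta_{N_0}$ and arbitrary higher coefficients) to be contained in a single $\slg_r(\mathcal{O}_{X^*})$-orbit, and there is no reason for that — changing the higher-order terms of $\gamma$ changes the glueing datum and in general the bundle $E$. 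What \emph{is} true is that $\mathcal{D}$ is exhausted by the closed images $\pi(\mathcal{D}_0\cap\mathcal{Q}^{(N)})$ and hence equals one of them, but those pieces are not your truncations and are not obviously affine spaces (the paper only knows them to be complete intersections, smooth in codimension $1$). The paper avoids this entirely: given \emph{any} two points of $\mathcal{D}$, it lifts them to $\gamma,\gamma'\in\mathcal{M}$ (canonically, by Proposition~\ref{theorem:b-s-basis-for-lambda}) and takes the line $\{(1-x)\gamma+x\gamma'\}$ inside $\mathcal{M}$; its image is a rational curve through the two points, which gives rational connectedness directly without ever descending to a finite-dimensional chart of $\mathcal{M}$. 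Your computation that $\mathcal{M}$ is cut out by solving for $\mathrm{tr}\,\Theta_k$ is fine (it is essentially the paper's Corollary~\ref{theorem:dim-of-d}), but it does not feed into a valid unirationality argument as written.

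The normality argument is also not salvageable in the form you give. Boutot-type statements (quotients with rational singularities) apply to GIT quotients of finite-dimensional varieties by \emph{reductive groups}; here the group acting is the infinite-dimensional, non-reductive ind-group $\slg_r(\mathcal{O}_{X^*})$, and reductivity of the point stabilizers $\au\,E$ is not the relevant hypothesis. The paper's actual route is the one you list only as a ``backup,'' and the part you would still be missing is $S_2$: the paper gets $R_1$ from normality of $\mathcal{M}$ (a direct limit of complete intersections smooth in codimension $1$, hence normal by Serre) together with freeness of the action in codimension $1$, so that $\mathcal{D}_0\simeq\mathcal{D}\times\slg_r(\mathcal{O}_{X^*})$ in codimension $1$; and it gets $S_2$ from the fact that every rational function on $\mathcal{D}$ is a ratio of global sections of $\mathcal{L}^c\big\vert_{\mathcal{D}}$, $c\gg1$, coming from the quotient construction. ``Smoothness of $\mathcal{M}^{(N_0)}$ plus the explicit form of the fibers'' gives you no handle on $S_2$ for the quotient, so the backup as stated is not a proof either. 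The projectivity paragraph, by contrast, matches the paper and is fine.
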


\begin{proof}
It follows from the above {\it Assumption} that the group
$\slg_r(\mathcal{O}_{X^*})$ acts freely at the general point on
$\mathcal{D}_0$. This gives

\begin{lemma}
\label{theorem:lemma-d-c-i} $\mathcal{D}$ is irreducible and
reduced (i.e. integral).
\end{lemma}

\begin{proof}
Recall that $\mathcal{D}_0$ and $\mathcal{M}$ are isomorphic at
the general point (cf. the proof of Lemma~\ref{theorem:d-o-q}).
Note also that $\mathcal{M}$ is integral (compare with the proof
of \cite[Proposition 2.6]{bea-las}). Hence $\mathcal{D}_0$ is
integral as well. Then, since $\mathcal{D}_0 \simeq
\mathcal{D}\times\slg_r(\mathcal{O}_{X^*})$ (equivariantly) at the
general point, the claim follows from \cite[Proposition 6.4, Lemma
6.3,\,d)]{bea-las}.
\end{proof}

From the definition of $\pi: \mathcal{D}_0 \map \mathcal{D}$ we
deduce that the locus $\mathcal{D}\subset\mathcal{SU}_X(r)$ is
closed in analytic topology (cf. Lemma~\ref{theorem:d-o-q} and
Remark~\ref{remark:sl-x-orbit}). Hence $\mathcal{D}$ is a
projective integral variety.

Further, consider some lattice $\Lambda\in\mathcal{D}_0$,
identified with the matrix $\gamma\in\mathcal{M}$ as in the proof
of Lemma~\ref{theorem:d-o-q}. Choose another lattice
$\Lambda'\in\mathcal{D}_0$, with associated matrix $\gamma'$, and
define the curve
$$
C := \{(1-x)\gamma + x\gamma'\ \vert \
x\in\com\}\subset\mathcal{D}_0.
$$

\begin{lemma}
\label{theorem:c-is-rat} $\pi(C)$ is a rational curve.
\end{lemma}

\begin{proof}
Indeed, there's an obvious birational map $\p^1 \dashrightarrow C$
of ind-schemes (for $\p^1 = \varinjlim \p^1$ tautologically),
which yields a rational dominant map $\p^1 \dashrightarrow
\pi(C)$.
\end{proof}

It follows from Lemmas~\ref{theorem:lemma-d-c-i} and
\ref{theorem:c-is-rat} that $\mathcal{D}$ is rationally connected
(cf. \cite[Ch.\,IV, Proposition 3.6]{kol}). Thus it remains to
prove normality.

Firstly, the proof of \cite[Proposition 6.1]{bea-las} shows that
$\mathcal{M}$ is smooth in codimension $1$, which implies that
$\mathcal{M}$ is normal because it is a complete intersection on
$\slg_r(\com[[t]])\subset\slg_r(K)$ (more precisely, $\mathcal{M}$
is a direct limit of finite-dimensional complete intersections,
which are smooth in codimension $1$, hence normal by Serre's
criterion). Then $\mathcal{D}_0\subset\mathcal{Q}$ is also normal
for $\slg_r(\mathcal{O})$ acting freely on
$\slg_r(K)\supset\mathcal{M}$.

Secondly, since $\text{rk}\,\Lambda > 1$ for any lattice
$\Lambda\in\mathcal{D}_0$, the condition $\Lambda = g\cdot\Lambda$
for some $g\in\slg_r(\mathcal{O}_{X^*})\setminus{\{\text{id}\}}$
is of codimension $>1$ on $\mathcal{D}_0$. Thus we get
$\mathcal{D}_0 = \mathcal{D}\times\slg_r(\mathcal{O}_{X^*})$ and
$\mathcal{D}$ is normal -- all in codimension $1$.

Finally, if $\mathcal{L}$ is the ample generator of
$\text{Pic}\,\mathcal{SU}_X(r)$ (see {\ref{subsection:p-3}}), it
follows from the quotient construction of $\mathcal{D}$ that any
function in $\com(\mathcal{D})$ can be represented as a ratio of
global sections of various restrictions
$\mathcal{L}^c\big\vert_{\mathcal{D}},c\gg 1$. In particular, the
$S_2$ property is satisfied for all the rational functions on
$\mathcal{D}$, and so $\mathcal{D}$ is normal by Serre's
criterion.

Proposition~\ref{theorem:props-of-d} is completely proved.
\end{proof}

From (the proof of) Proposition~\ref{theorem:props-of-d} we deduce

\begin{cor}
\label{theorem:dim-of-d} $\mathcal{D}\subset\mathcal{SU}_X(r)$ has
codimension $\le r^2$ (hence in particular $\dim \mathcal{D}\ge
(r^2 - 1)(g - 1) - r^2$ can be made arbitrarily large).
\end{cor}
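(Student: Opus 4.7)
The plan is to bound the codimension of $\mathcal{D}$ in $\mathcal{SU}_X(r)$ by $r^2$, from which the dimension estimate follows via the standard formula $\dim \mathcal{SU}_X(r) = (r^2-1)(g-1)$. I would work through the chain of ind-scheme quotients $\mathcal{M} \subset \slg_r(\com[[t]])$, then $\mathcal{D}_0 \subset \mathcal{Q}$, then $\mathcal{D} \subset \mathcal{SU}_X(r)$, tracking how codimension behaves at each step and transferring the concrete codimension of $\mathcal{M}$ all the way down to the bottom quotient.

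First I would recall, following the proof of Lemma~\ref{theorem:d-o-q} and Proposition~\ref{theorem:b-s-basis-for-lambda}, that $\mathcal{D}_0$ is locally isomorphic to $\mathcal{M}$ at each of its points, so it suffices to compute the codimension of $\mathcal{M}$ inside a local slice for $\mathcal{Q}$. Inside any finite-dimensional truncation $\slg_r(\com[t]/t^{N+1})$, the subset $\mathcal{M}_N := \mathcal{M}\cap \slg_r(\com[t]/t^{N+1})$, consisting of matrices $I+\sum_{i=2}^{N} t^i\Theta_i$, is cut out by the vanishing of the $t^0$- and $t^1$-coefficients of $\gamma - I$. These are $2r^2$ matrix-entry equations, one redundant at each order thanks to $\det\gamma = 1$, giving a complete intersection of codimension $2(r^2-1)$. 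Uniformity of this bound in $N$ lets it pass to the ind-limit.

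Next I would pass to $\mathcal{Q} = \slg_r(K)/\slg_r(\mathcal{O})$. Near the identity coset, the subgroup $\slg_r(\com[[t]])$ provides a local slice, and the residual gauge is right multiplication by $\slg_r(\com[[t]])\cap \slg_r(\mathcal{O}) = \slg_r(\com)$ (constant matrices): a free action of dimension $r^2 - 1$ that absorbs precisely the conditions ``$\gamma(0) = I$''. Hence $\mathcal{D}_0$ has codimension at most $2(r^2-1) - (r^2-1) = r^2 - 1$ in $\mathcal{Q}$. Quotienting further by $\slg_r(\mathcal{O}_{X^*})$ preserves this codimension, since by the \emph{Assumption} this group acts freely on a dense open locus of both $\mathcal{Q}$ and $\mathcal{D}_0$. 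Via Remark~\ref{remark:sl-x-orbit} this yields $\mathrm{codim}(\mathcal{D}\subset \mathcal{SU}_X(r))\le r^2-1\le r^2$, and with $r$ fixed but $g$ arbitrary, $\dim \mathcal{D}\ge (r^2-1)(g-1) - r^2$ grows without bound.

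The main obstacle will be the bookkeeping of codimensions in the ind-scheme setting: each step really takes place in a direct limit, so I must verify that the finite-dimensional codimension estimates in the truncations $\slg_r(\com[t]/t^{N+1})$ and $\mathcal{Q}^{(N)}$ are uniform in $N$ and survive passage to the limit. A secondary subtlety is that the local slice $\slg_r(\com[[t]])$ does not cover all of $\mathcal{Q}$; however, by Lemma~\ref{theorem:can-take-gamma-id} every $\slg_r(\mathcal{O}_{X^*})$-orbit meets it, so the estimate on this ``big cell'' transfers to the whole of $\mathcal{D}\subset\mathcal{SU}_X(r)$.
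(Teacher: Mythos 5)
Your proposal is correct and follows essentially the same route as the paper: both reduce the bound to the observation (via Lemma~\ref{theorem:can-take-gamma-id}) that $\mathcal{M}$ is cut out inside the normalized slice $\{\gamma = I \bmod t\}$ by the single matrix equation $\Theta_1 = 0$, i.e.\ by at most $r^2$ conditions, and then transfer this codimension through the quotients to $\mathcal{D}\subset\mathcal{SU}_X(r)$. Your more explicit gauge bookkeeping (absorbing $\gamma(0)=I$ into the residual $\slg_r(\com)$-action and using $\mathrm{tr}\,\Theta_1=0$) even yields the marginally sharper bound $r^2-1$, which of course still gives the stated $\le r^2$.
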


\begin{proof}
This follows from the fact that the locus
$\mathcal{M}\subset\slg_r(\com[[t]])$ is defined by the equation
$\Theta_1 = 0$ for generic matrix $I + \displaystyle\sum_{i =
1}^{\infty}t^i\Theta_i\in\slg_r(\com[[t]])$ (cf.
{\ref{subsection:tp-2}} and
Lemma~\ref{theorem:can-take-gamma-id}).
\end{proof}

\refstepcounter{equation}
\subsection{}
\label{subsection:tp-3}

Now let's turn to the proof of Theorem~\ref{theorem:main-theorem}.
Suppose that $Z \subset \mathcal{D}$ is a smooth rational curve.
Associating with any point $E \in Z$ the lattice $\Lambda$ as in
{\ref{subsection:p-1}} yields an affine bundle $\mathcal{V}$ over
$Z$. More specifically, since every $\Lambda$ carries the
canonical basis of Proposition~\ref{theorem:b-s-basis-for-lambda},
using it one identifies $\Lambda$ with the affine space $\com^r$.

Further, since $Z = \p^1$ and the bundle $\mathcal{V}$ is locally
analytically trivial by construction, we get $\mathcal{V} =
\displaystyle\bigoplus_{i=1}^r\mathcal{O}_{\p^1}(d_i)$ for some
$d_i\in\cel$. Note that $\sum d_i = \deg \mathcal{V} = 0$ for
$\text{Vol}\,\Lambda = 1$. Then taking a global section of
$\mathcal{V}$ gives an embedding $Z \subset \mathcal{D}_0$.

Thus we obtain a non-constant family of matrices $\gamma \in
\slg_r(K)$ \emph{algebraically} parameterized by $Z$. In
particular, there exists a rational function $f\in\com(Z)$ such
that $f(z)\ne\infty$ for all $z \in Z$, which is absurd. The proof
of Theorem~\ref{theorem:main-theorem} is complete (cf.
Proposition~\ref{theorem:props-of-d} and
Corollary~\ref{theorem:dim-of-d}).

\bigskip

\section{Some questions and comments}
\label{section:dis}

We'd like to conclude the paper by asking the following questions:

\begin{itemize}

\item Is there a weird $V$ over a field of positive characteristic
(cf. \cite{bo-yu}, \cite{bo-yu-ha})? Similarly, modifying the
notion of rational connectivity accordingly, is there a
non-K\"ahler compact complex (maybe even smooth) weird $V$ (cf.
\cite{verb})?

\smallskip

\item Is the locus $\mathcal{D}\subset\mathcal{SU}_X(r)$ a Fano
variety (and how to express it in terms of theta-divisors)? Does
it have locally factorial singularities (resp. what is its Picard
group)? Same questions for any weird $V$.

\smallskip

\item By applying the weak factorization theorem it would be interesting
to find out whether being weird provides an obstruction for
variety to be rational. What about the case of $\mathcal{D}$ again
(cf. Remark~\ref{remark:rem-1})?

\smallskip

\item Note that the locus $\mathcal{D}$ consists entirely of \emph{strictly}
semi-stable bundles (cf. Remark~\ref{remark:con-from-th}). In
particular, when $r = 2$, writing any $E \in \mathcal{D}$ as a sum
of two line bundles one finds that there is a $2:1$-cover
$\text{Pic}^0(X) \map \mathcal{D}$ (i.e. $\mathcal{D}$ is the
\emph{Kummer variety}).\footnote{This example shows that the {\it
Assumption} made in {\ref{subsection:tp-2}} is crucial for
$\mathcal{D}$ to be rationally connected. Indeed, for $r = 2$ the
locus $\mathcal{D}$ is not rationally connected (as
$h^0(\mathcal{D},K_{\mathcal{D}}^m) = 1,m\gg 1$), and the reason
is that one lacks the canonical correspondence $\Lambda
\leftrightarrow \gamma$ here (cf. the proof of
Lemmas~\ref{theorem:lemma-d-c-i} and \ref{theorem:c-is-rat}).} Is
the same true for an arbitrary $r > 2$ (with $\text{Pic}^0(X)$
replaced by an appropriate Abelian variety and ``$2:1$" by
``generically Galois")? Similar question for any weird $V$.

\smallskip

\item Is it possible to find weird $V$ in \emph{any} given
dimension (cf. Corollary~\ref{theorem:dim-of-d})?

\end{itemize}

\bigskip

\thanks{{\bf Acknowledgments.}
I'd like to thank A. Beauville, S. Galkin, J. Koll\'ar, and T.
Milanov for their interest, valuable comments and references. Most
of the paper was written during my visits to MIT (US), UOttawa
(Canada) and PUC (Chile) in April-May 2015. I am grateful to these
Institutions and people there for hospitality. The work was
supported by World Premier International Research Initiative
(WPI), MEXT, Japan, and Grant-in-Aid for Scientific Research
(26887009) from Japan Mathematical Society (Kakenhi).

\bigskip


\begin{thebibliography}{16}

\bibitem{bea-las}
A. Beauville\ and\ Y. Laszlo, Conformal blocks and generalized
theta functions, Comm. Math. Phys. {\bf 164} (1994), no.~2, 385 --
419.

\smallskip

\bibitem{bea-las-sor}
A. Beauville, Y. Laszlo\ and\ C. Sorger, The Picard group of the
moduli of $G$-bundles on a curve, Compositio Math. {\bf 112}
(1998), no.~2, 183 -- 216.

\smallskip

\bibitem{bo-yu}
F. Bogomolov\ and\ Y. Tschinkel, Rational curves and points on
$K3$ surfaces, Amer. J. Math. {\bf 127} (2005), no.~4, 825 -- 835.

\smallskip

\bibitem{bo-yu-ha}
F. Bogomolov, B. Hassett\ and\ Y. Tschinkel, Constructing rational
curves on K3 surfaces, Duke Math. J. {\bf 157} (2011), no.~3, 535
-- 550.

\smallskip

\bibitem{castravet}
A.-M. Castravet, Examples of Fano varieties of index one that are
not birationally rigid, Proc. Amer. Math. Soc. {\bf 135} (2007),
no.~12, 3783 -- 3788 (electronic).

\smallskip

\bibitem{don}
S. K. Donaldson, A new proof of a theorem of Narasimhan and
Seshadri, J. Differential Geom. {\bf 18} (1983), no.~2, 269 --
277.

\smallskip

\bibitem{drez-nar}
J.-M. Drezet\ and\ M. S. Narasimhan, Groupe de Picard des
vari\'et\'es de modules de fibr\'es semi-stables sur les courbes
alg\'ebriques, Invent. Math. {\bf 97} (1989), no.~1, 53 -- 94.

\smallskip

\bibitem{faltings}
G. Faltings, A proof for the Verlinde formula, J. Algebraic Geom.
{\bf 3} (1994), no.~2, 347 -- 374.

\smallskip

\bibitem{kebekus-1}
S. Kebekus, Families of singular rational curves, J. Algebraic
Geom. {\bf 11} (2002), no.~2, 245 -- 256.

\smallskip

\bibitem{kebekus-2}
S. Kebekus\ and\ S. J. Kov\'acs, Are rational curves determined by
tangent vectors?, Ann. Inst. Fourier (Grenoble) {\bf 54} (2004),
no.~1, 53 -- 79.

\smallskip

\bibitem{kol}
J. Koll\'ar, {\it Rational curves on algebraic varieties},
Ergebnisse der Mathematik und ihrer Grenzgebiete. 3. Folge. A
Series of Modern Surveys in Mathematics, 32, Springer, Berlin,
1996.

\smallskip

\bibitem{lange}
H. Lange, Zur Klassifikation von Regelmannigfaltigkeiten, Math.
Ann. {\bf 262} (1983), no.~4, 447 -- 459.

\smallskip

\bibitem{nar-ram}
S. Ramanan, A note on C. P. Ramanujam, in {\it C. P. Ramanujam---a
tribute}, 11 -- 13, Tata Inst. Fund. Res. Studies in Math., 8,
Springer, Berlin.

\smallskip

\bibitem{nar-sesh}
M. S. Narasimhan\ and\ C. S. Seshadri, Stable and unitary vector
bundles on a compact Riemann surface, Ann. of Math. (2) {\bf 82}
(1965), 540 -- 567.

\smallskip

\bibitem{tyu}
A. Tyurin, {\it Quantization, classical and quantum field theory
and theta functions}, CRM Monograph Series, 21, Amer. Math. Soc.,
Providence, RI, 2003.

\smallskip

\bibitem{verb}
M. Verbitsky, Rational curves and special metrics on twistor
spaces, Geom. Topol. {\bf 18} (2014), no.~2, 897 -- 909.

\end{thebibliography}
\end{document}